\newtheorem{theorem}{Theorem}
\newtheorem{coro}{Corollary}
\newtheorem{lemma}{Lemma}
\newtheorem*{remarks}{Remark}
\title{Characterizations of canonically compactifiable graphs via intrinsic metrics and algebraic properties}
\author{Simon Puchert}
\begin{document}

\maketitle

\begin{abstract}
We consider infinite graphs and the associated energy forms. We show
that a graph is canonically compactifiable (i.e. all functions of
finite energy are bounded) if and only if the underlying set is
totally bounded with respect to any finite measure intrinsic metric.
Furthermore, we show that a graph is canonically compactifiable if and only if the space of functions of finite energy is an algebra.
These results answer questions in a recent work of Georgakopoulos,
Haeseler, Keller, Lenz, Wojciechowski.
\end{abstract}

\section*{Introduction}
Open bounded sets in Euclidean space provide an important and well
studied instance of spectral geometry. Recently, discrete analogues
of such sets have become a focus of attention
\cite{GHKLW,KLWS,KLSS}. In particular, \cite{GHKLW} proposes the concept
of canonically compactifiable graphs as graphs with strong
intrinsic compactness properties. By definition, a graph is called
canonically compactifiable if all functions of finite energy are
bounded. For such graphs, there is a natural compactification, namely, the Royden compactification.
Clearly, in order to study the geometry of such graphs, it is
desirable to understand metric compactness features of such graphs.

As shown in \cite{GHKLW}, total boundedness with respect to common
metrics such as the resistance metric or a standard length metric
implies that the underlying graph is canonically compactifiable but
the converse is not true. So, \cite{GHKLW} leaves open the question
of a metric compactness characterization of such graphs. Still, a
candidate for such a characterization is proposed there. More
specifically, it is shown that  total boundedness with respect to
all finite measure intrinsic metrics implies that the graph is
canonically compactifiable and the converse is shown to hold for
locally finite trees. The converse for general graphs, however,
remained open and is posed as a problem in \cite{GHKLW}.  The first main
result of this note (Theorem \ref{main}) solves this problem.
Combining this main result with the mentioned result
of \cite{GHKLW}, we obtain that a graph is canonically compactifiable
if and only if it is totally bounded with respect to all metrics that are
intrinsic with respect to a finite measure (Corollary \ref{coro-main}).
This characterization turns out to extend without modification to general (non-connected) graphs.

To put this result in perspective, we briefly discuss the relevance of
intrinsic metrics next. Intrinsic metrics for strongly local
Dirichlet forms were introduced in \cite{Sturm} and have
subsequently played a fundamental role as they allow for a study
of intrinsic spectral geometry of such forms. For general regular
Dirichlet forms a concept was only proposed recently in \cite{FLW}
(see \cite{Fol1,Fol2,GHM,Uem} for independent related
studies on graphs). A special feature of the case of general
Dirichlet forms is well worth pointing out: While in the strongly
local case there is a maximal intrinsic metric there are, in general,
several incomparable intrinsic metrics on graphs \cite{FLW}. Hence,
in general, one cannot expect that it will be sufficient to consider
only one intrinsic metric for graphs. Recent years have witnessed
rather successful applications of intrinsic metrics in order to
understand spectral geometry of graphs, see the mentioned works as
well as e.g. \cite{BHK,BKW,HKW}. Given this, it is very natural to
look for a characterization of the intrinsic compactness property
of canonical compactifiability in terms of intrinsic metrics. Theorem \ref{main} provides such a characterization.

In the last section we provide an answer to another question raised in \cite{GHKLW}. This question concerns an algebraic characterization of canonically compactifiable graphs. More specifically, in \cite{GHKLW} it is shown that the set of functions of finite energy is an algebra if the graph is canonically compactifiable and we show that the converse is also true (Theorem~\ref{algebraic}). Our proof can be modified slightly to obtain a similar characterization for uniform transience (Theorem~\ref{theorem:ut}), a  property that was recently introduced in \cite{KLWS}. Moreover, it can also be generalized to resistance forms in the sense of Kigami \cite{Kig}. We briefly discuss this extension in Theorem~\ref{rf}. Typical examples for resistance forms are provided by metric graphs and fractals, we refer to \cite{Kig,Kig2} for details.  

\bigskip

\textbf{Acknowledgments.} The author is grateful to Daniel Lenz, Marcel Schmidt and Rados{\l}aw K. Wojciechowski for enlightening discussions, pointing out the
problems and help in preparing the manuscript. Further thanks go to two anonymous referees for helpful suggestions.

\section{Background}
In this section we first introduce the necessary notations and recall
basic facts shown in \cite{GHKLW} (see \cite{KL1} for a description of
the general setting as well).

A weighted graph $G=(X,b)$ consists of a nonempty countable set $X$ of nodes and a symmetric edge weight function $b: X\times X \rightarrow [0,\infty)$ that vanishes on the diagonal and satisfies the summability condition $$ \sum\limits_{y\in X} b(x,y) < \infty\ \mathrm{for\ all\ } x\in X.$$ Two nodes $x,y\in X$ are called \emph{connected} if there is a sequence $(x=x_0,\ldots,x_n=y)$ with $b(x_k,x_{k+1}) > 0$ for all $0\leq k < n$. Similarly, a graph is called connected, if all of its nodes are connected.

This graph structure gives rise to a quadratic form that assigns to any function $f:X\rightarrow \mathbb{C}$ its \emph{Dirichlet energy}
$$ \tilde{Q}(f) := \frac{1}{2}\sum\limits_{x,y\in X} b(x,y)|f(x)-f(y)|^2 $$
and consequently defines the \emph{functions of finite energy}
$$\mathfrak{D}(G) := \{f:X\rightarrow \mathbb{C} \mid \tilde{Q}(f) < \infty\}. $$
This set is closed with respect to addition, since
$$ \tilde{Q}(f+g)^{1/2} \leq \tilde{Q}(f)^{1/2} + \tilde{Q}(g)^{1/2}. $$

The graph $(X,b)$ is called \emph{canonically compactifiable}, if all functions of finite energy are bounded, that is if $\mathfrak{D}(G) \subseteq \ell^\infty(X)$.

In the rest of this note we will only look at connected graphs. We can do this since a graph is canonically compactifiable if and only if it has finitely many connected components (i.e. equivalence classes with respect to connectedness) and every connected component is canonically compactifiable. We will explicitly state if we don't use this assumption.

For any $o\in X$, we define a pseudo-norm $\|\cdot\|_o$ on $\mathfrak{D}(G)$ via 
$$ \|f\|_o^2 := \tilde{Q}(f) + |f(o)|^2. $$
Since we assumed connectedness, this yields a Hilbert space $(\mathfrak{D}(G), \|f\|_o)$ and the pointwise evaluation
$$\mathfrak{D}(G) \rightarrow \mathbb{C}, \qquad f\mapsto f(x)$$
is continuous for every $x\in X$, see e.g. \cite{S}, Section 1.2.

Pseudo metrics are symmetric functions $\sigma: X\times X\rightarrow [0,\infty)$ that vanish on the diagonal and satisfy the triangle inequality $\sigma(x,z)\leq \sigma(x,y) + \sigma(y,z)$.

Any pseudo metric $\sigma$ naturally induces a distance from any nonempty subset $U\subseteq X$ via $$\sigma(\cdot,U): X\rightarrow[0,\infty),\qquad \sigma(x, U) = \inf\limits_{y\in U} \sigma(x,y)$$ and the \emph{diameter} of $U\subseteq X$ by $$\mathrm{diam}_\sigma(U) := \sup\limits_{x,y \in U} \sigma(x,y).$$

Whenever $(X,b)$ is a graph and $m$ is a measure on $X$ (i.e. an additive function $\mathcal{P}(X)\rightarrow [0,\infty]$ induced by a node weight function $X\rightarrow (0,\infty)$), a pseudo metric $\sigma$ is called \emph{intrinsic with respect to the measure} $m$, if the inequality $$\frac{1}{2}\sum\limits_{y\in X} b(x,y)\sigma(x,y)^2 \leq m(\{x\})$$ holds for all $x\in X$.

Any graph $(X,b)$ comes with a metric $\varrho$ defined as
$$\varrho(x,y):=\sup\left\{|f(x)-f(y)| \mid f\in \mathfrak{D}(G)\ \mathrm{with}\ \tilde{Q}(f) \leq 1\right\}.$$
Given this definition, it is easy to see that  any function $f$ of
bounded Dirichlet energy satisfies
$$|f(x)-f(y)| \leq \tilde{Q}(f)^{1/2} \varrho(x,y).$$ Indeed, the inequality is optimal; the definition of $\varrho$ gives that it is characterized by $$ \inf\{\tilde{Q}(f)\mid |f(x) - f(y)| = C\} = \frac{C^2}{\varrho(x,y)^2}$$ for any $C>0$.
The metric $\varrho$ is tied to canonical compactifiability,
as Theorem~4.3 in \cite{GHKLW} proves that a connected graph $(X,b)$ is
canonically compactifiable if and only if it is bounded with respect
to~$\varrho$, i.e. $\mathrm{diam}_\varrho (X) < \infty$. This will be used below.

\begin{remarks}
 This metric is closely tied to the resistance metric $r$
 by $\varrho^2 = r$ (it is shown that $r$ is a metric for locally finite graphs in \cite{GHKLW}, Theorem 3.19 and for general graphs in \cite{LSS}).
The metric is also related to intrinsic metrics (see
Theorem 3.13 in \cite{GHKLW}). 
\end{remarks}

\begin{remarks}
 Let us emphasize that our results do not assume
local finiteness of the graph.
\end{remarks}

\section{Characterization via intrinsic metrics}
In this section, we provide a characterization of canonical compactifiability via intrinsic pseudo metrics.

A key step is the subsequent lemma. It provides an estimate for the energy of the distance to a set with respect to an intrinsic pseudo metric, which may be of interest in other contexts as well. A weaker bound (by $m(X)$ instead of $2m(X\setminus U)$) is given in \cite{GHKLW}, Propositions 3.10 and 3.11.

\begin{lemma} \label{estimate}
 Let $G = (X,b)$ be a graph and let $\sigma$ be an intrinsic pseudo metric with respect to a finite measure $m$ on $X$. For a nonempty subset $U\subset X$, the energy of $\sigma(\cdot,U)$ is bounded by
 $$\tilde{Q}(\sigma(\cdot,U)) \leq 2 m(X\setminus U).$$
\end{lemma}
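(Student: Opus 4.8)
The plan is to estimate $\tilde{Q}(\sigma_U)$ directly from its definition, exploiting two elementary features of the distance function $\sigma_U = \sigma(\cdot,U)$. First, $\sigma_U$ vanishes identically on $U$. Second, $\sigma_U$ is $1$-Lipschitz with respect to $\sigma$, that is, $|\sigma_U(x) - \sigma_U(y)| \leq \sigma(x,y)$ for all $x,y \in X$; this is immediate from the triangle inequality, since $\sigma(x,z) \leq \sigma(x,y) + \sigma(y,z)$ yields $\sigma_U(x) \leq \sigma(x,y) + \sigma_U(y)$ after taking the infimum over $z \in U$, and the symmetric inequality follows the same way. In particular $|\sigma_U(x)-\sigma_U(y)|^2 \leq \sigma(x,y)^2$ for every pair.

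Write $V := X \setminus U$. The first step is to note that any term of the defining sum with both $x,y \in U$ vanishes, so only pairs with at least one endpoint in $V$ survive. Applying the Lipschitz bound to these surviving terms gives
$$\tilde{Q}(\sigma_U) \leq \frac{1}{2} \sum_{\substack{x,y \in X \\ \{x,y\} \cap V \neq \emptyset}} b(x,y)\,\sigma(x,y)^2.$$
The second step is a bookkeeping reduction of this restricted sum. Since the summand $b(x,y)\sigma(x,y)^2$ is symmetric and nonnegative, inclusion-exclusion on the index set $\{(x,y) : x \in V \text{ or } y \in V\}$ gives
$$\sum_{\substack{x,y \in X \\ \{x,y\} \cap V \neq \emptyset}} b(x,y)\,\sigma(x,y)^2 = 2\sum_{x \in V}\sum_{y \in X} b(x,y)\,\sigma(x,y)^2 - \sum_{x,y \in V} b(x,y)\,\sigma(x,y)^2 \leq 2\sum_{x \in V}\sum_{y \in X} b(x,y)\,\sigma(x,y)^2,$$
where the inequality drops the nonnegative $V \times V$ block. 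Hence $\tilde{Q}(\sigma_U) \leq \sum_{x \in V}\sum_{y \in X} b(x,y)\sigma(x,y)^2$. Finally, the intrinsic metric condition bounds the inner sum by $2m(\{x\})$ for each fixed $x$, and summing over $x \in V$ yields $\tilde{Q}(\sigma_U) \leq 2m(V) = 2m(X\setminus U)$.

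The computation is short, so the only real point of care is the combinatorial step, which is exactly where the improvement over \cite{GHKLW} lives: applying the Lipschitz estimate naively to the full sum $\frac{1}{2}\sum_{x,y} b(x,y)\sigma(x,y)^2$ controls it by $m(X)$, because the intrinsic inequality is then summed over all of $X$. The sharper constant $2m(X\setminus U)$ comes precisely from discarding the $U \times U$ block and organizing the remainder so that the intrinsic inequality need only be summed over the outer index $x \in V$. I would double-check the symmetrization and inclusion-exclusion carefully to confirm no factor of $2$ is silently lost or gained, since that factor is the whole content of the sharpened bound.
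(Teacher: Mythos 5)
Your proof is correct and follows essentially the same route as the paper: both restrict the energy sum to pairs not contained in $U\times U$ via the vanishing of $\sigma_U$ on $U$, apply the $1$-Lipschitz bound, and use the symmetry of the summand to reduce to a sum with outer index in $X\setminus U$ where the intrinsic inequality applies. Your inclusion--exclusion bookkeeping is just a repackaging of the paper's exact partition of $X^2\setminus U^2$ into $(X\setminus U)\times X$ and $U\times(X\setminus U)$, and the factor of $2$ is accounted for correctly.
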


\begin{proof}
 Immediately, we deduce $\sigma(x,U) = 0$ for all $x\in
U$ and $|\sigma(x,U)- \sigma(y,U)|\leq \sigma(x,y)$ for all $x,y\in X$.
These properties already imply the desired bound on the Dirichlet energy of $\sigma(\cdot,U)$:

\begin{eqnarray*}
\tilde{Q}(\sigma(\cdot,U))  &= & \frac{1}{2} \sum_{x,y} b(x,y) (\sigma(x,U) - \sigma(y,U))^2\\
& = & \frac{1}{2} \sum_{(x,y) \in X^2\setminus U^2} b(x,y) (\sigma(x,U) - \sigma(y,U))^2\\
& \leq & \frac{1}{2} \sum_{(x,y) \in X^2\setminus U^2} b(x,y) \sigma(x,y)^2\\
& = & 
\sum_{x\in X\setminus U} \left(\frac{1}{2} \sum_{y\in X} b(x,y) \sigma(x,y)^2\right) +  
\sum_{y\in X\setminus U} \left(\frac{1}{2} \sum_{x\in U} b(x,y) \sigma(x,y)^2\right)\\
&\leq & 2 m (X\setminus U).
\end{eqnarray*}
Here, we used the fact that $\sigma$ is intrinsic with respect to $m$ in the
last estimate.
\end{proof}

\begin{theorem}\label{main}
Let $(X,b)$ be a canonically compactifiable graph and consider a pseudo
metric $\sigma$ which is intrinsic with respect to a finite measure $m$.
Then $(X,b)$ is totally bounded with respect to $\sigma$.
\end{theorem}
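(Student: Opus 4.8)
The plan is to argue by contradiction, so suppose $(X,b)$ is \emph{not} totally bounded with respect to $\sigma$. Then there is some $\varepsilon > 0$ for which no finite collection of $\varepsilon$-balls covers $X$, and a standard greedy selection produces an infinite $\varepsilon$-separated sequence $(x_i)_{i\in\mathbb{N}}$, i.e. $\sigma(x_i,x_j)\geq\varepsilon$ whenever $i\neq j$. I would then attach to each $x_i$ the open ball $B_i := \{z\in X : \sigma(z,x_i) < \varepsilon/2\}$; the separation guarantees that these balls are pairwise disjoint and that $x_j\in X\setminus B_i$ for every $j\neq i$, so in particular each $U_i := X\setminus B_i$ is nonempty. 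The functions I would work with are the distances $\sigma_{U_i}=\sigma(\cdot,U_i)$ to the complements of these balls.

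Two competing estimates on $\tilde{Q}(\sigma_{U_i})$ then drive the proof. For the \emph{upper} bound, Lemma \ref{estimate} applied to $U_i$ gives $\tilde{Q}(\sigma_{U_i}) \leq 2m(X\setminus U_i) = 2m(B_i)$; since the $B_i$ are disjoint and $m$ is finite, $\sum_i \tilde{Q}(\sigma_{U_i}) \leq 2\sum_i m(B_i) \leq 2m(X) < \infty$. For the \emph{lower} bound, I would note that $\sigma_{U_i}$ vanishes on $U_i$ while $\sigma_{U_i}(x_i) = \sigma(x_i,U_i) \geq \varepsilon/2$, because every point of $U_i$ is at $\sigma$-distance at least $\varepsilon/2$ from $x_i$. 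Picking any $x_j$ with $j\neq i$ (which lies in $U_i$, so $\sigma_{U_i}(x_j)=0$) and invoking the defining inequality $|f(x)-f(y)|\leq \tilde{Q}(f)^{1/2}\varrho(x,y)$ for $f=\sigma_{U_i}$ yields $\varepsilon/2 \leq \tilde{Q}(\sigma_{U_i})^{1/2}\,\varrho(x_i,x_j)$.

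Here canonical compactifiability enters: by Theorem 4.3 of \cite{GHKLW} it is equivalent to $R := \mathrm{diam}_\varrho(X) < \infty$, so $\varrho(x_i,x_j)\leq R$ and hence $\tilde{Q}(\sigma_{U_i}) \geq \varepsilon^2/(4R^2)$ for \emph{every} $i$. Summing over the infinitely many indices forces $\sum_i \tilde{Q}(\sigma_{U_i}) = \infty$, contradicting the finite upper bound above, and this contradiction shows that $X$ must be totally bounded. The step I expect to carry the real weight is the upper bound: it is precisely the \emph{localized} estimate $\tilde{Q}(\sigma_{U_i})\leq 2m(X\setminus U_i)$ of Lemma \ref{estimate}, rather than the cruder bound by $m(X)$ from \cite{GHKLW}, that renders $\sum_i \tilde{Q}(\sigma_{U_i})$ convergent; with the weaker bound each term would only satisfy $\tilde{Q}(\sigma_{U_i})\leq m(X)$, the summability would fail, and the whole contradiction would collapse. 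Everything else is bookkeeping: verifying disjointness of the balls, nonemptiness of the $U_i$, and the elementary distance identities for $\sigma_{U_i}$.
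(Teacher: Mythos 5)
Your argument is correct. Each step checks out: failure of total boundedness yields an infinite $\varepsilon$-separated sequence by greedy selection; the balls $B_i$ of radius $\varepsilon/2$ are pairwise disjoint by the triangle inequality, so $\sum_i m(B_i)\leq m(X)<\infty$ and Lemma \ref{estimate} gives $\sum_i \tilde{Q}(\sigma_{U_i})\leq 2m(X)$; on the other hand $\sigma_{U_i}(x_i)\geq\varepsilon/2$ while $\sigma_{U_i}(x_j)=0$ for $j\neq i$, so the inequality $|f(x)-f(y)|\leq\tilde{Q}(f)^{1/2}\varrho(x,y)$ together with $\mathrm{diam}_\varrho(X)<\infty$ (Theorem 4.3 of \cite{GHKLW}) forces each $\tilde{Q}(\sigma_{U_i})$ to be bounded below by a fixed positive constant, which is the desired contradiction. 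This uses exactly the same two ingredients as the paper, the localized bound of Lemma \ref{estimate} and the boundedness of $\varrho$, but arranges them differently. The paper argues directly and constructively: for given $\varepsilon$ it takes $S$ to be the set of atoms with $m(\{x\})\geq\delta$, which is finite, chooses $\delta$ so small that $m(X\setminus S)<\varepsilon^2/(2\,\mathrm{diam}_\varrho(X)^2)$, and then the same two inequalities show $\sigma(\cdot,S)<\varepsilon$ everywhere, so the heavy atoms themselves form the $\varepsilon$-net. Your route is a packing argument by contradiction: it is non-constructive, but it isolates very clearly why the improvement from $m(X)$ to $2m(X\setminus U)$ in Lemma \ref{estimate} is indispensable (a point equally true of, though less visible in, the paper's proof). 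Two minor remarks: the division by $\varrho(x_i,x_j)$ implicitly assumes $\mathrm{diam}_\varrho(X)>0$, but if it vanishes the inequality $\varepsilon/2\leq 0$ is already absurd; and finiteness of $\sigma$ (hence of $\sigma_{U_i}$) follows from connectedness and the intrinsic-metric condition, an issue shared with the paper's proof.
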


\begin{proof}
 Fix an arbitrary $\varepsilon > 0$. We have to find a finite subset
 $S$ of $X$ with $\sigma (x,S) < \varepsilon$ for all $x\in X$.

For  $\delta > 0$, define the set $U_\delta = \{x\in X: m(\{x\}) <
\delta\}$. As $m$ is finite and the graph is canonically
compactifiable (i.e. $\mathrm{diam}_\varrho (X) < \infty$ holds by
the discussion above), we can choose $\delta > 0$ such that
$$m(U_\delta) < \frac{\varepsilon^2}{2\ \mathrm{diam}_\varrho(X)^2}.$$

We now claim that the set $S:=X\setminus U_\delta$ has the desired
properties:

Indeed, the set is finite as we clearly have $|S|
\leq \frac{m(X)}{\delta} < \infty$.

Moreover, $\sigma(x, S) < \varepsilon$ holds as
can be seen as follows: Consider the function
$\sigma(\cdot,S)$. Lemma \ref{estimate} helps us estimate the Dirichlet energy of this function:
\begin{eqnarray*}
\tilde{Q}(\sigma(\cdot,S)) &\leq & 2 m (U_\delta)\\
& < & \frac{\varepsilon^2}{\mathrm{diam}_\varrho(X)^2}.
\end{eqnarray*}

Now, recall the inequality $|f(x)-f(y)|\leq \tilde{Q}(f)^{1/2}\varrho(x,y)$ and
pick an arbitrary point $o\in S$ to see
$$\sigma(x, S) \leq |f(x) - f(o)| \leq \tilde{Q}(\sigma(\cdot, S))^{1/2} \mathrm{diam}_\varrho(X) < \varepsilon. $$ This
finishes the proof.
\end{proof}

Combining the previous theorem with its converse, \cite{GHKLW}, Corollary 4.5, and the fact that a general graph is canonically compactifiable if and only if it has finitely many connected components and each component is canonically compactifiable, we
obtain the following characterization of canonically compactifiable
graphs.

\begin{coro}\label{coro-main}
 A (not necessarily connected) graph $(X,b)$ is canonically compactifiable if and only if $X$
is totally bounded with respect to any pseudo metric $\sigma$ that is intrinsic with respect to a finite measure.
\end{coro}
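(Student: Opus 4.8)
The statement is a biconditional in which I read "any" in the universal sense: $(X,b)$ is canonically compactifiable precisely when $X$ is totally bounded with respect to \emph{every} pseudo metric that is intrinsic for some finite measure. The plan is to treat the two implications separately, observing that each is essentially a repackaging of an ingredient already at hand.

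For the forward implication there is nothing to do beyond invoking Theorem \ref{main}. Assuming $(X,b)$ is canonically compactifiable, I would let $\sigma$ be an arbitrary metric that is intrinsic with respect to a finite measure $m$ and apply Theorem \ref{main} directly to conclude that $X$ is totally bounded with respect to $\sigma$; since $\sigma$ was arbitrary this yields total boundedness for all such metrics simultaneously.

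For the converse I would argue by contraposition, which is the content of \cite{GHKLW}, Corollary 4.5, and whose mechanism I would reproduce as follows. Suppose $(X,b)$ is not canonically compactifiable, so there exists an unbounded $f \in \tilde{D}$; fixing a basepoint $x_0$ and estimating $|f(x)-f(x_0)| \geq |f(x)| - |f(x_0)|$ shows $\sup_{x,y}|f(x)-f(y)| = \infty$. The pseudo metric $\sigma_f(x,y) := |f(x)-f(y)|$ is then the natural candidate obstructing total boundedness: it satisfies the triangle inequality and is intrinsic for the measure $m(\{x\}) := \tfrac{1}{2}\sum_{y} b(x,y)\,|f(x)-f(y)|^2 + \varepsilon_x$, where the weights $\varepsilon_x > 0$ are chosen summable. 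Here the inequality in the definition of an intrinsic metric holds by construction, the strict positivity of $\varepsilon_x$ guarantees that $m$ is genuinely induced by a node weight function into $(0,\infty)$, and finiteness follows from $\sum_x \tfrac{1}{2}\sum_y b(x,y)|f(x)-f(y)|^2 = \tilde{Q}(f) < \infty$. Since $\mathrm{diam}_{\sigma_f}(X) = \sup_{x,y}|f(x)-f(y)| = \infty$ while any finite $\varepsilon$-net forces a finite diameter, $X$ cannot be totally bounded with respect to $\sigma_f$, contradicting the hypothesis.

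The combination itself is purely formal; the substance lives in Theorem \ref{main} and in \cite{GHKLW}, Corollary 4.5. The one point I expect to require care is the distinction between metrics and pseudo metrics: the function $\sigma_f$ need not separate points and is in general only a pseudo metric. This is immaterial for the total boundedness conclusion, which is insensitive to that distinction, but if the statement is to be read with genuine metrics one perturbs $\sigma_f$ by a small separating term chosen so small that neither intrinsicness with respect to a slightly enlarged finite measure nor the infinite diameter is disturbed. This is bookkeeping rather than a conceptual obstacle.
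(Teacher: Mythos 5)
Your proposal is correct and follows the paper's route exactly: the forward direction is Theorem \ref{main}, and the converse is \cite{GHKLW}, Corollary 4.5, which the paper simply cites while you correctly reconstruct its mechanism (the pseudo metric $\sigma_f(x,y)=|f(x)-f(y)|$ built from an unbounded finite-energy function, made intrinsic for a finite measure via $\tilde{Q}(f)<\infty$). Your closing remark on the pseudo metric versus metric distinction is a fair point of care, but it does not change the argument.
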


\section{Algebraic characterization}

In this section, we will prove that a graph is canonically compactifiable if and only if the space of functions of finite energy is an algebra (with the usual pointwise addition and multiplication of functions). The proof can be transferred to show a similar algebraic characterization of uniform transience and can even be extended to resistance forms, see the discussion after Corollary~\ref{coro:algebra}. 

Since Lemma 4.8 in \cite{GHKLW} already states that the space of functions of finite Dirichlet energy is an algebra if the underlying graph is canonically compactifiable, we will focus on the other direction. 

Again, we use the splitting of canonical compactifiability to reduce the problem to connected graphs.

\begin{theorem} \label{algebraic}
 Let $G=(X,b)$ be a graph. If the space of functions of finite Dirichlet energy $\mathfrak{D}(G)$ is an algebra, the graph $G$ is canonically compactifiable.
\end{theorem}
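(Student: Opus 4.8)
The plan is to prove the contrapositive in a strong, uniform form: instead of exhibiting a single product of infinite energy, I would show that the algebra hypothesis forces \emph{every} $f\in\tilde{D}$ to be bounded, so that $\tilde{D}\subseteq l^\infty(X)$, which is exactly canonical compactifiability. The mechanism is to convert the multiplicative structure into bounded operators via the closed graph theorem. For the setup, fix a base point $o\in X$ and equip $\tilde{D}$ with $\|f\|^2:=\tilde{Q}(f)+|f(o)|^2$. On a connected graph this is a genuine norm (only constants have vanishing energy, and the value at $o$ then pins them down), and $(\tilde{D},\|\cdot\|)$ is complete; I would invoke this completeness as a standard fact about finite-energy spaces in the general framework of \cite{KL1}. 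The fundamental inequality $|f(x)-f(y)|\le\tilde{Q}(f)^{1/2}\varrho(x,y)$ recalled above gives $|f(x)|\le\|f\|\,(1+\varrho(o,x))$ for every $x$, so each point evaluation $f\mapsto f(x)$ is continuous on $\tilde{D}$.

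Next I would establish that multiplication operators are bounded. Fix $g\in\tilde{D}$ and consider $M_g\colon\tilde{D}\to\tilde{D}$, $M_g f:=gf$; the hypothesis that $\tilde{D}$ is an algebra is precisely the statement that $M_g$ is everywhere defined on $\tilde{D}$. I claim $M_g$ is closed: if $f_k\to f$ and $gf_k\to h$ in $\|\cdot\|$, then by continuity of point evaluations $f_k(x)\to f(x)$ and $(gf_k)(x)\to h(x)$ for every $x$, while $(gf_k)(x)=g(x)f_k(x)\to g(x)f(x)$; hence $h=gf$. By the closed graph theorem $M_g$ is bounded, and I write $C_g:=\|M_g\|$.

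The final step passes from the operator norm back to the supremum norm. Since $M_{g^n}=M_g^{\,n}$, we obtain $\|g^n f\|\le C_g^{\,n}\|f\|$ for all $n$ and all $f\in\tilde{D}$. I would apply this with $f=\delta_x$, the indicator of $\{x\}$, which lies in $\tilde{D}$ because $\tilde{Q}(\delta_x)=\sum_{y}b(x,y)<\infty$, and then evaluate at $x$ using $\delta_x(x)=1$ together with the point-evaluation bound: $|g(x)|^n=|(g^n\delta_x)(x)|\le C_g^{\,n}\,\|\delta_x\|\,(1+\varrho(o,x))$. Taking $n$-th roots and letting $n\to\infty$ (the constant factor is finite and positive, since $\varrho$ is a genuine metric) yields $|g(x)|\le C_g$ for every $x$. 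Thus $g\in l^\infty(X)$ with $\|g\|_\infty\le C_g$, and as $g\in\tilde{D}$ was arbitrary the graph is canonically compactifiable.

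The main obstacle is the functional-analytic input underpinning the middle step: one must know that $\tilde{D}$ is complete in the energy norm, so that the closed graph theorem is available, and that point evaluations are continuous, so that $M_g$ is closed. Completeness is the delicate ingredient, and I would cite it from the general theory rather than reprove it here; granting it, the closedness of $M_g$ and the spectral-radius-type root trick in the last step are routine.
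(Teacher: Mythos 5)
Your proof is correct, and it takes a genuinely different route from the paper. The paper argues by contraposition in a constructive way: assuming the graph is not canonically compactifiable, it uses the unboundedness of $\varrho$ to pick points $x_n$ with $8^n<\varrho(x_n,o)\leq 8^{n+1}$ and functions $f_n$ with $f_n(x_n)=4^n$, $0\le f_n\le 4^n$, $\tilde Q(f_n)\le 4^{-n}$, and then exhibits the explicit function $f=\sum_n f_n\in\tilde D$ with $\tilde Q(f^2)=\infty$, so that $\tilde D$ fails to be an algebra. Your argument instead derives the positive conclusion directly: completeness of $(\tilde D,\|\cdot\|)$ plus continuity of point evaluations makes each multiplication operator $M_g$ closed, hence bounded by the closed graph theorem, and the iteration $M_{g^n}=M_g^n$ combined with the $n$-th root trick on $|g(x)|^n=|(g^n\delta_x)(x)|$ yields the quantitative bound $\|g\|_\infty\le\|M_g\|$. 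Each approach has its merits: the paper's is elementary and self-contained (it needs no Banach-space input, only the variational characterization of $\varrho$ and the Dirichlet property of $\tilde Q$) and produces an explicit witness $f$ with $f^2\notin\tilde D$; yours is shorter given the functional-analytic background, proves the stronger spectral-radius-type estimate $\|g\|_\infty\le\|M_g\|$, and in effect shows that $\tilde D$ is a commutative Banach algebra whenever it is an algebra. Two points you should make explicit: the completeness of $(\tilde D,\|\cdot\|)$ is a genuine ingredient that must be imported or proved (it follows in a few lines from $|f(x)-f(o)|\le\tilde Q(f)^{1/2}\varrho(o,x)$, finiteness of $\varrho$ on connected graphs, and Fatou's lemma, but you should verify your citation actually covers it); and, like the paper, your argument relies on the standing connectedness assumption (the statement is false verbatim for, say, an infinite edgeless graph, where $\tilde D=\mathbb{C}^X$ is an algebra), so the reduction to connected components should be acknowledged.
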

\begin{proof}
 We analyze graphs that are not canonically compactifiable and find functions $f\in\mathfrak{D}(G)$ such that $\tilde{Q}(f^2) = \infty$, implying that $\mathfrak{D}(G)$ cannot be an algebra.
 
 Let $G$ be a graph that is not canonically compactifiable and fix an arbitrary node $o\in X$. We know that $\varrho$ is unbounded on $G$, since $G$ is not canonically compactifiable, see Theorem~4.3 in \cite{GHKLW}. Select an infinite sequence of nodes $(x_n:n\in I \subseteq\mathbb{N})$ such that $8^n<\varrho(x_n,o)\leq 8^{n+1}$ (if there is no such $x_n$ for a certain $n$, just omit this index). Now, we aim to find functions $f_n\in \mathfrak{D}(G)$ that satisfy
 $$f_n(o)=0, f_n(x_n) = 4^n, 0\leq f_n\leq 4^n\ \mathrm{and}\ \tilde{Q}(f_n)\leq 4^{-n}.$$
 Such functions exist, because
 $$8^n < \varrho(x_n,o) = \sup \{|f(x_n) - f(o)| \mid f \in \mathfrak{D}(G), \tilde{Q}(f)\leq 1\}$$
 yields
 $$ \inf\{\tilde{Q}(f)\mid f(o)=0, f(x_n) = 4^n, 0\leq f\leq 4^n\} = \frac{(4^n)^2}{\varrho(x_n,o)^2} < 4^{-n}, $$
 where the additional condition $0\leq f\leq 4^n$ can be introduced since $\tilde{Q}$ is Markovian, i.e. $Q(0\vee f \wedge 4^n)\leq Q(f)$ for all $f$.
 
 Considering $\sum\limits_{n\in I}\tilde{Q}(f_n)^{1/2} \leq \sum\limits_{n=1}^{\infty} 2^{-n} = 1$ and the fact that $(\mathfrak{D}(G), \|\cdot\|_o)$ is a Hilbert space, the function $f:= \sum\limits_{n\in I} f_n \in \mathfrak{D}(G)$ is well-defined and $Q(f)\leq 1$. Conversely, for all $n\in I$ we have $$ \tilde{Q}(f^2) \geq \frac{|f(x_n)^2 - f(o)^2|^2}{\varrho(x_n,o)^2} \geq \frac{f_n(x_n)^4}{\varrho(x_n,o)^2}\geq \frac{256^n}{64^{n+1}} = 4^{n-3}, $$
 thus $\tilde{Q}(f^2)=\infty$.
\end{proof}

Combining Theorem \ref{algebraic} with its converse, Lemma 4.8 in \cite{GHKLW}, then yields the following characterization.

\begin{coro}\label{coro:algebra}
 A graph $G=(X,b)$ is canonically compactifiable if and only if the space of functions of finite Dirichlet energy $\mathfrak{D}(G)$ is an algebra.
\end{coro}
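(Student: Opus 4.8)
The plan is to read the statement as a biconditional and to supply each implication from a result already established, so that the corollary itself reduces to a one-line synthesis. For the direction ``canonically compactifiable $\Rightarrow$ $\tilde{D}$ is an algebra'' I would invoke Lemma 4.8 of \cite{GHKLW}: once $\tilde{D}\subseteq l^\infty(X)$, the product of two finite-energy functions again has finite energy, the key being the pointwise decomposition $f(x)g(x)-f(y)g(y) = f(x)\bigl(g(x)-g(y)\bigr) + g(y)\bigl(f(x)-f(y)\bigr)$, which lets one bound $\tilde{Q}(fg)^{1/2}$ by $\|f\|_\infty \tilde{Q}(g)^{1/2} + \|g\|_\infty \tilde{Q}(f)^{1/2}$. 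Closure under addition is already recorded in the Background section, so $\tilde{D}$ is indeed an algebra in this case and no new argument is needed.

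For the converse, ``$\tilde{D}$ is an algebra $\Rightarrow$ canonically compactifiable'', I would appeal directly to Theorem \ref{algebraic}, the substantive new ingredient proved above, which I may use as a black box. Its contrapositive form is exactly what is required here: if the graph fails to be canonically compactifiable, one produces a single $f\in\tilde{D}$ whose square $f^2$ has infinite energy, so that $\tilde{D}$ cannot be closed under multiplication. Combining the two implications yields the stated equivalence, and this combination is immediate.

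The main obstacle therefore does not lie in the corollary at all, but in the converse half packaged as Theorem \ref{algebraic}. There the delicate point is to balance two competing demands on the constructed function: the energy budgets $\tilde{Q}(f_n)\le 4^{-n}$ must sum, so that $\sum_n \tilde{Q}(f_n)^{1/2}<\infty$ and hence $f\in\tilde{D}$, while the very same functions must grow fast enough, $f_n(x_n)=4^n$ along nodes with $8^n<\varrho(x_n,o)\le 8^{n+1}$, that each lower bound $\tilde{Q}(f^2)\ge f_n(x_n)^4/\varrho(x_n,o)^2$ diverges with $n$. The geometric scaling is chosen precisely to reconcile summable energy with a superlinearly growing square, and that tension is the crux of the whole argument; the corollary merely harvests its conclusion.
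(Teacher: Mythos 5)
Your proposal is correct and matches the paper exactly: the corollary is obtained by combining Lemma 4.8 of \cite{GHKLW} (canonically compactifiable $\Rightarrow$ $\tilde{D}$ is an algebra) with Theorem \ref{algebraic} for the converse. Your additional sketch of the product decomposition and of the tension in the construction behind Theorem \ref{algebraic} is accurate but not needed for the corollary itself.
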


Let $\mathfrak{D}_0(G)$ be the closure of $C_c(X)$ in the Hilbert space $$(\mathfrak{D}(G), \|\cdot\|_o)$$ and define the metric
$$\varrho_0(x,y) = \sup\left\{|f(x)-f(y)| \mid f\in \mathfrak{D}_0(G)\ \mathrm{with}\ \tilde{Q}(f) \leq 1\right\}.$$
The graph $G$ is called {\em uniformly transient} if $\mathfrak{D}_0(G) \subseteq C_0(X)$, where $C_0(X)$ stands for the closure of $C_c(X)$ in $\ell^\infty(X)$, see Section~2 in \cite{KLWS}. Moreover, for connected graphs, uniform transience is equivalent to the boundedness of $\varrho_0$, see \cite{KLWS}, Theorem~3.2. 

The space $\mathfrak{D}_0(G) \cap \ell^\infty(X)$ is an algebra, see \cite{Soa}, Theorem~6.2, and the  proof of Theorem~\ref{algebraic} can be modified by replacing $\mathfrak{D}(G)$ with $\mathfrak{D}_0(G)$ and $\varrho$ with $\varrho_0$. These observations yield the following characterization of uniform transience. 
\begin{theorem}\label{theorem:ut}
 A graph is uniformly transient if and only if $\mathfrak{D}_0(G)$ is an algebra.
\end{theorem}

The same line of reasoning also applies to resistance forms in the sense of Kigami. Here we only state the result and sketch a proof. For notation, further background and examples we refer to \cite{Kig,Kig2}.

Let $(\mathcal E, \mathcal F)$ be a resistance form on the set $X \neq \emptyset$, see Definition~2.3.1 in \cite{Kig}, and let 
$$\varrho_{\mathcal E}(x,y) = \sup\{|f(x) - f(y)| \mid f \in \mathcal F, \mathcal E(f) \leq 1\} $$
be the square root of the associated resistance metric. Our main result for resistance forms reads as follows. 

\begin{theorem}\label{rf}
The following assertions are equivalent.
\begin{itemize}
 \item[(i)] $\mathcal F \subseteq \ell^\infty(X)$.
 \item[(ii)] $\rho_{\mathcal E}$ is bounded. 
 \item[(iii)] $\mathcal F$ is an algebra.
\end{itemize}
\end{theorem}
\begin{proof}
 The equivalence of (i) and (ii) can be proven along the same lines as Theorem~4.3 in \cite{GHKLW}. 
 
 (i) $\Longrightarrow$ (iii): This follows from the fact that $\mathcal F \cap \ell^\infty(X)$ is an algebra, see e.g. Theorem~2.22 in \cite{S1} and the following remark.
 
 (iii)  $\Longrightarrow$ (ii): This can be proven  along the same lines as Theorem~\ref{algebraic}.
\end{proof}

\end{document}